\theoremstyle{plain}
\newtheorem{theorem}{Theorem}[section]
\newtheorem*{theorem*}{Theorem}
\newtheorem{proposition}[theorem]{Proposition}
\newtheorem{corollary}[theorem]{Corollary}
\newtheorem{lemma}[theorem]{Lemma}
\theoremstyle{definition}
\newtheorem{remark}[theorem]{Remark}
\newtheorem{example}[theorem]{Example}
\newcommand{\enm}[1]{\ensuremath{#1}}          %
\newcommand{\cal}[1]{\mathcal{#1}}
\newcommand{\ZZ}{\enm{\mathbb{Z}}}
\newcommand{\PP}{\enm{\mathbb{P}}}
\newcommand{\Ff}{\enm{\cal{F}}}
\newcommand{\Ii}{\enm{\cal{I}}}
\newcommand{\Oo}{\enm{\cal{O}}}
\newcommand{\Ss}{\enm{\cal{S}}}
\renewcommand{\phi}{\varphi}
\renewcommand{\theta}{\vartheta}
\renewcommand{\epsilon}{\varepsilon}
\renewcommand{\to}[1][]{\xrightarrow{\ #1\ }}
\newcommand{\old}[1]{}
\begin{document}

\title[Dependent subsets]
{Dependent subsets of embedded projective varieties}
\author{Edoardo Ballico}
\address{Dept. of Mathematics\\
 University of Trento\\
38123 Povo (TN), Italy}
\email{ballico@science.unitn.it}
\thanks{The author was partially supported by MIUR and GNSAGA of INdAM (Italy).}
\subjclass[2010]{14N05}
\keywords{secant variety; $X$-rank; zero-dimensional scheme; variety with only one ordinary double point; OADP}

\begin{abstract}
Let $X\subset \mathbb {P}^r$ be an integral and non-degenerate variety. Set $n:= \dim (X)$. Let $\rho (X)''$ be the maximal integer
such that every zero-dimensional scheme $Z\subset X$ smoothable in $X$ is linearly independent. We prove that $X$ is linearly normal if $\rho (X)''\ge
\lceil (r+2)/2\rceil$ and that $\rho (X)'' < 2\lceil (r+1)/(n+1)\rceil$, unless either $n=r$ or $X$ is a rational normal curve.
\end{abstract}

\maketitle

\section{Introduction}

Let $X\subset \PP^r$ be an integral and non-degenerate variety defined over an algebraically closed field with characteristic
zero. Set
$n:=
\dim X$. We recall that a zero-dimensional scheme $Z\subset X$ is said to be \emph{smoothable in $X$} if it is a flat limit of
a family of finite subsets of $X$ with cardinality $\deg (Z)$ (see \cite{bb+} for a discussion of it). If $X$ is smooth (or if
$Z$ is contained in the smooth locus of $X$) $Z$ is smoothable in $X$ if and only if it is smoothable in $\PP^r$ and the
notion of smoothability in $\PP^r$ does not depend on the choice of the embedding of $Z$ in a projective space (\cite[Proposition 2.1]{bb+}).
 Let
$\rho (X)$ (resp. $\rho (X)'$, resp. $\rho (X)''$) denote the maximal integer $t>0$ such that each zero-dimensional scheme
(resp. each finite set, resp. each zero-dimensional scheme smoothable in $X$) $Z\subset X$ with $\deg (Z)=t$ is linearly
independent. Obviously $\rho (X)\le \rho (X)''\le \rho (X)'$. Since $X$ is embedded in $\PP^r$, we have $\rho (X)\ge 2$. The integers $\rho (X)$, $\rho (X)'$
and $\rho (X)''$ have been used in several papers connected to $X$-rank, the symmetric tensor rank, i.e. the additive decomposition of polynomials, and the tensor rank (\cite{bb+}, \cite{bgl}).

For any $q\in \PP^r$ the $X$-rank $r_X(q)$ of $q$ is the minimal positive integer
$t$ such that
$q\in
\langle S\rangle$ for some finite subset $S\subset X$ with $\sharp(S) =t$, where $\langle \ \
\rangle$ denote the linear span. For any positive integer
$t$ the $t$-secant variety
$\sigma _t(X)$ of $X$ is the closure in $\PP^r$ of the union of all $\langle S\rangle$ with $S$ a finite subset of $X$ with
cardinality
$t$.

The \emph{border $X$-rank} $b_X(q)$ of $q\in \PP^r$ is the minimal integer $k$ such that $q\in \sigma _k(X)$.
The \emph{generic rank} $r_{X,\mathrm{gen}}$ is the minimal integer $k>0$ such that $\sigma _k(X) =\PP^r$. There is a
non-empty open subset $U\subset \PP^r$ such that $r_X(q) =r_{X,\mathrm{gen}}$ for all $q\in U$.

In this paper we prove that if $\rho (X)''$ is large, then $X$ is linearly normal and that $\rho (X)''$ cannot be very large
for $n>1$ (Theorem \ref{xx1}). 

We prove the following results.

\begin{proposition}\label{xx1}
Assume that $X$ is a curve and $\rho (X)''  \ge \lceil (r+2)/2\rceil$. Then $X$ is linearly normal. 
\end{proposition}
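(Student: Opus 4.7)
The plan is a proof by contradiction: assume $X\subset \PP^r$ is not linearly normal and exhibit a smoothable zero-dimensional subscheme $Z'\subset X$ of degree $k:=\lceil (r+2)/2\rceil$ which fails to be linearly independent, contradicting $\rho(X)''\ge k$. Non-linear-normality means $h^0(X,\Oo_X(1))\ge r+2$, so $X$ arises as an isomorphic linear projection of its complete (linearly normal) embedding in a bigger projective space. Factoring that projection through a generic chain of single-point projections, I reduce to a non-degenerate irreducible curve $Y\subset \PP^{r+1}$ together with a point $O\in \PP^{r+1}\setminus Y$ such that the linear projection $\pi\colon \PP^{r+1}\dashrightarrow \PP^r$ centered at $O$ restricts to an isomorphism $\pi|_Y\colon Y\xrightarrow{\sim} X$.

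The next step is to invoke the classical non-defectivity of secant varieties of irreducible non-degenerate curves: $\dim \sigma_k(Y)=\min(2k-1,r+1)=r+1$, hence $\sigma_k(Y)=\PP^{r+1}$. Consequently $O\in \sigma_k(Y)$, and by the standard border-rank description of the secant variety (see \cite{bb+}) there is a zero-dimensional scheme $Z\subset Y$ of degree at most $k$, smoothable in $Y$, with $O\in \langle Z\rangle$. By adjoining general reduced points of $Y$ disjoint from the support of $Z$ I may assume $\deg Z=k$ while preserving both smoothability and the containment $O\in \langle Z\rangle$.

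To conclude, transport $Z$ back to $X$: since $\pi|_Y$ is an isomorphism, $Z':=\pi(Z)\subset X$ is a zero-dimensional scheme of degree $k$, smoothable in $X$. Because $O\in \langle Z\rangle$, projection from $O$ drops the dimension of that linear span by exactly one, so
\[
\dim \langle Z'\rangle \le \dim \pi(\langle Z\rangle)=\dim \langle Z\rangle-1\le k-2.
\]
Thus $Z'$ has degree $k$ but spans at most a $\PP^{k-2}$, and so is not linearly independent, yielding the desired contradiction.

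I expect the main obstacle to be the setup step: carefully verifying that failure of linear normality really produces the data of a non-degenerate $Y\subset \PP^{r+1}$ with a center $O\notin Y$ and an isomorphic projection onto $X$, and that non-degeneracy is preserved at each intermediate one-point projection. Once that is in place, the remaining ingredients — non-defectivity of $\sigma_k$ for an irreducible non-degenerate curve, the border-rank characterization ensuring the existence of a smoothable degree-$k$ scheme witnessing $O\in \sigma_k(Y)$, and the elementary one-dimension drop of a linear span under projection from an internal point — are classical.
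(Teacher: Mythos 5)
Your overall strategy is the same as the paper's: assume $X$ is an isomorphic projection of a non-degenerate curve $Y\subset\PP^{r+1}$ from a point $o\notin Y$, use non-defectivity of secant varieties of curves to see $o\in\sigma_k(Y)$ with $k=\lceil (r+2)/2\rceil$, produce a smoothable scheme $Z\subset Y$ of degree $\le k$ with $o\in\langle Z\rangle$, and project to get a dependent smoothable scheme of degree $\le k$ on $X$. The setup you worry about is indeed standard and is exactly how the paper begins.

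The genuine weak point is the middle step, which you dismiss as ``the standard border-rank description of the secant variety.'' The statement that every point of $\sigma_k(Y)$ lies in the span of a \emph{smoothable} zero-dimensional scheme of degree $\le k$ is \emph{not} true unconditionally: $\sigma_k$ is the \emph{closure} of the union of spans of $k$-point sets, and border rank can be strictly smaller than smoothable rank. The result you need (recalled in the paper's preliminaries, from \cite[Lemma 2.6, Theorem 1.18]{bgl} and \cite[Proposition 2.5]{bb+}) carries the hypothesis $k\le\rho(Y)''$ — roughly, one needs all smoothable degree-$k$ schemes on $Y$ to be linearly independent so that the incidence variety of pairs $(Z,q)$ with $q\in\langle Z\rangle$ is proper over the smoothable locus of the Hilbert scheme and hence has closed image. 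You never verify this hypothesis for $Y$; your hypothesis is about $X$. The paper supplies the missing link: since $\ell|_Y\colon Y\to X$ is an isomorphism and projection can only decrease the dimension of a linear span, any dependent smoothable scheme on $Y$ maps to a dependent smoothable scheme of the same degree on $X$, whence $\rho(Y)''\ge\rho(X)''\ge k$, and only then does the cited result apply. This is a short but essential step; as written, your proof invokes a false general principle at exactly the point where the hypothesis $\rho(X)''\ge\lceil (r+2)/2\rceil$ has to do its second piece of work. (The rest — padding $Z$ to degree exactly $k$, the one-dimension drop of the span under projection from $o\in\langle Z\rangle$ — is fine.)
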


\begin{proposition}\label{xx2}
Assume $n:= \dim X\ge 2$, $r_{X,\mathrm{gen}}  = \lceil (r+1)/(n+1)\rceil$ and $\rho (X)'' > \lceil (r+1)/(n+1)\rceil$. Then $X$ is linearly normal.
\end{proposition}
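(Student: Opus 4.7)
The plan is to argue by contradiction. Assume $X$ is not linearly normal and set $s := h^1(\PP^r, \Ii_X(1)) \geq 1$; then there exists a non-degenerate variety $Y \subset \PP^{r+s}$ and a linear subspace $L \cong \PP^{s-1}$ disjoint from $Y$ such that the linear projection $\pi_L$ from $L$ restricts to an isomorphism $\pi_L|_Y \colon Y \to X$. Write $t := \lceil (r+1)/(n+1) \rceil$, so the hypotheses read $\sigma_t(X) = \PP^r$ and $\rho(X)'' \geq t+1$; the goal is to derive a contradiction.

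The first key step is a transfer lemma: every zero-dimensional scheme $\tilde Z \subset Y$ that is smoothable in $Y$ and has $\deg \tilde Z \leq t+1$ is linearly independent in $\PP^{r+s}$ and satisfies $\langle \tilde Z \rangle \cap L = \emptyset$. Indeed, $Z := \pi_L(\tilde Z)$ is smoothable in $X$ of the same degree (because $\pi_L|_Y$ is an isomorphism), so $\rho(X)'' \geq t+1$ gives $\dim \langle Z \rangle = \deg Z - 1$. Since $\pi_L$ sends $\langle \tilde Z \rangle$ onto $\langle Z \rangle$, the chain $\deg Z - 1 = \dim \langle Z \rangle \leq \dim \langle \tilde Z \rangle \leq \deg \tilde Z - 1$ is forced to consist of equalities, and $\pi_L$ then being a linear isomorphism on $\langle \tilde Z \rangle$ is equivalent to $\langle \tilde Z \rangle \cap L = \emptyset$.

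I would then upgrade this pointwise statement to $\sigma_k(Y) \cap L = \emptyset$ for $k \in \{t, t+1\}$. Given $x \in L \cap \sigma_k(Y)$, properness of $\op{Hilb}^k(Y)$ (after base change) produces a one-parameter family of finite $k$-tuples $S_\epsilon \subset Y$ and points $x_\epsilon \in \langle S_\epsilon \rangle$ with $x_\epsilon \to x$ and $S_\epsilon$ specializing to a smoothable $\tilde Z$ of degree $k$; by Step~1 this $\tilde Z$ spans a full $(k-1)$-plane, so the Grassmannian limit of the $\langle S_\epsilon \rangle$ must coincide with $\langle \tilde Z \rangle$, whence $x \in \langle \tilde Z \rangle$, contradicting $\langle \tilde Z \rangle \cap L = \emptyset$. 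The argument then closes by a dimension count: the rational map $Y^t \dashrightarrow \PP^{r+s}$ given by picking a point of a $t$-secant composes with $\pi_L$ to dominate $\sigma_t(X) = \PP^r$, so $\dim \sigma_t(Y) \geq r$; combined with $\sigma_t(Y) \subsetneq \PP^{r+s}$ and the standard principle that a non-degenerate $Y$ with $\sigma_k(Y) \subsetneq \PP^{r+s}$ satisfies $\sigma_k(Y) \subsetneq \sigma_{k+1}(Y)$, this gives $\dim \sigma_{t+1}(Y) \geq r+1$. But then $\dim \sigma_{t+1}(Y) + \dim L \geq (r+1) + (s-1) = r+s$ inside $\PP^{r+s}$ forces $\sigma_{t+1}(Y) \cap L \neq \emptyset$, contradicting the case $k = t+1$ of Step~2.

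The place where I expect most care is the Grassmannian-limit argument in Step~2: one must rule out that the limit of the $(k-1)$-planes $\langle S_\epsilon \rangle$ is strictly larger than $\langle \tilde Z \rangle$, which would happen precisely if $\tilde Z$ failed to span a $(k-1)$-plane. This is exactly where $\rho(X)'' \geq t+1$ (rather than merely $\geq t$) is used, since one needs the transfer lemma at degree $t+1$ in order to apply the argument to $\sigma_{t+1}(Y)$ and thereby activate the strict increase $\dim \sigma_{t+1}(Y) > \dim \sigma_t(Y)$ that produces the final dimension-count contradiction.
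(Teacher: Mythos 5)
Your argument is correct and is essentially the paper's proof: pass to the linear normalization, use $\rho(X)''\ge t+1$ to transfer linear independence of smoothable schemes (and hence, via the border-rank/limit-of-spans argument, to keep the projection center off $\sigma_{t+1}(Y)$), and then use $\sigma_t(X)=\PP^r$ together with the strict growth of secant varieties of a non-degenerate variety to force the center into $\sigma_{t+1}(Y)$. The only difference is cosmetic: the paper first reduces to a one-point projection $Y\subset\PP^{r+1}\to X$ and concludes $\sigma_{t+1}(Y)=\PP^{r+1}$, whereas you keep the full center $L\cong\PP^{s-1}$ and close with the intersection-dimension theorem in $\PP^{r+s}$; both are fine.
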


\begin{theorem}\label{xx2==}
Let $X\subset \PP^r$ be an integral and non-degenerate variety. Set $n:= \dim X$. We have $\rho (X)'' \ge  2\lceil
(r+1)/(n+1)\rceil$ if and only if either $r=n$ (i.e. $X =\PP^r$) or $n=1$, $r$ is odd and $X$ is a rational normal curve.
\end{theorem}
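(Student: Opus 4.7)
The $(\Leftarrow)$ direction is a direct check: for $X = \PP^r$ we have $g = 1$ and $2g = 2$, and any zero-dimensional scheme of degree $2$ in projective space is linearly independent since it spans a line; for a rational normal curve $C \subset \PP^r$ with $r$ odd, $g = (r+1)/2$ so $2g = r + 1$, and via $C \cong \PP^1$ with $\Oo_C(1) = \Oo_{\PP^1}(r)$, the vanishing $H^1(\Oo_{\PP^1}(r - \deg Z)) = 0$ for $\deg Z \le r + 1$ yields the surjection $H^0(\Oo_{\PP^r}(1)) \twoheadrightarrow H^0(\Oo_Z(1))$ and hence linear independence of every such $Z$.

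For the $(\Rightarrow)$ direction assume $\rho(X)'' \ge 2g$ and $X \neq \PP^r$, so $r > n$, and seek a smoothable linearly dependent subscheme of degree $\le 2g$. In the curve case $n = 1$: if $X$ is not the rational normal curve then $\deg X \ge r + 1$ by the classical minimal-degree bound, so a general hyperplane $H$ meets $X$ in $\deg X \ge r + 1$ reduced points; any $r + 1$ of them form a smoothable finite subset of $X$ contained in $H \cong \PP^{r-1}$, hence is linearly dependent. This yields $\rho(X)'' \le r$, contradicting the hypothesis $\rho(X)'' \ge 2g \ge r+1$; the trivial fact that $r + 2$ points of $\PP^r$ are always linearly dependent similarly forces $\rho(X)'' \le r + 1 < r + 2 = 2g$ for $r$ even, so we must have $r$ odd and $X$ a rational normal curve.

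For $n \ge 2$ I first establish $r_{X,\mathrm{gen}} = g$: if $\sigma_g(X) \subsetneq \PP^r$ the abstract secant map $X^{(g)} \dashrightarrow \sigma_g(X)$ has positive-dimensional generic fibers, yielding two \emph{disjoint} $g$-subsets $S_1, S_2 \subset X$ with $\langle S_1 \rangle \cap \langle S_2 \rangle \ne \emptyset$; then $S_1 \cup S_2$ is smoothable of degree $2g$ with span of dimension $\le (g-1) + (g-1) = 2g - 2$, contradicting $\rho(X)'' \ge 2g$. So $r_{X,\mathrm{gen}} = g$, and Proposition \ref{xx2} gives that $X$ is linearly normal. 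To finish, take $g$ general points $p_i \in X$ with tangent directions $v_i \in T_{p_i}X$ and form the smoothable degree-$2g$ scheme $Z := \bigcup 2 p_i^{v_i}$ with $\langle Z \rangle = \langle \ell_1, \ldots, \ell_g \rangle$ of dimension $\le 2g - 1$, where $\ell_i$ is the tangent line at $p_i$ in direction $v_i$: it suffices to choose the $(p_i, v_i)$ so that the span drops to $2g - 2$, for instance by arranging two of the $\ell_i$ to be coplanar. A dimension count in $\PP(T_X)^g$ of total dimension $g(2n-1)$, combined with the codimension-$(r-2)$ coplanarity condition in $G(1, r)^2$, gives expected dimension $\ge g(2n-1) - (r-2) \ge 0$ thanks to $g \ge (r+1)/(n+1)$ and $n \ge 2$. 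The main obstacle is upgrading this expected count to unconditional non-emptiness for every linearly normal non-degenerate $X \subsetneq \PP^r$, which I expect to handle via a Chow-class positivity argument on the Grassmannian $G(1, r)$ together with the observation that for $r \le 2n$ the tangent variety $\tau(X)$ fills $\PP^r$, making every point of $\PP^r$ the concurrence point of multiple tangent lines and yielding the coplanar pair directly.
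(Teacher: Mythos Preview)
Your $(\Leftarrow)$ direction and the curve case $n=1$ are fine and essentially match the paper (the paper packages the $n=1$ step as Lemma \ref{i3}). In your Step 1 for $n\ge 2$ you do not actually need disjointness: if $S_1\neq S_2$ both realise $b_X(q)=g$ for $q$ general in $\sigma_g(X)$, then $q\notin\langle S_1\cap S_2\rangle$ (otherwise $b_X(q)<g$), so $S_1\cup S_2$ is already a reduced, smoothable, linearly dependent set of cardinality $\le 2g$. The paper argues this via Terracini instead and extracts in addition that $(n+1)\mid(r+1)$.

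The genuine gap is your finishing move for $n\ge 2$. You are working under the assumption $\rho(X)''\ge 2g$ and trying to manufacture a smoothable dependent scheme of degree $\le 2g$; but the hypothesis says precisely that none exists, so any such construction must exploit a strong structural consequence you have already derived. All you have at that point is linear normality, and that is far too weak to force two tangent lines to meet. Concretely, the incidence condition on a pair of tangent lines has expected dimension $2(2n-1)-(r-2)=4n-r$; the extra $g-2$ unconstrained factors in your count $g(2n-1)-(r-2)$ are irrelevant, since the coplanarity condition lives on only two of the factors and the locus is a product. For $r>4n$ a general linearly normal non-degenerate $X$ simply has no pair of meeting tangent lines, so neither the Chow-positivity heuristic nor the $\tau(X)=\PP^r$ observation (which only addresses $r\le 2n$) can close the argument.

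The paper's approach is entirely different and uses the hypothesis in the opposite direction, to \emph{forbid} dependent schemes rather than to find one. With $a=(r+1)/(n+1)\in\ZZ$, fix $a-1$ general smooth points $S\subset X$ and set $V=\langle\bigcup_{p\in S}T_pX\rangle$, of dimension $(a-1)(n+1)-1=r-n-1$. For any $o\in X\setminus S$ one has $o\notin V$: otherwise, writing $o$ in the direct sum $\bigoplus_{p\in S}T_pX$, one can choose a tangent direction $v_p$ at each $p$ so that $o\in\langle\bigcup_p v_p\rangle$, giving a smoothable dependent scheme of degree $2a-1$. The same trick shows the linear projection $X\setminus S\to\PP^n$ from $V$ is injective (a failure of injectivity produces a smoothable dependent scheme of degree $2a$). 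Hence this projection is an open immersion of an $n$-fold into $\PP^n$; resolving the indeterminacy at $S$ gives a surjective birational morphism from the blow-up $\tilde X\to\PP^n$, which for $n\ge 2$ is impossible unless $X\cong\PP^n$, contradicting $n<r$. The idea you are missing is this projection-to-$\PP^n$ argument, which converts the independence hypothesis into a global birational constraint on $X$.
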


If $n=r$ we have $\rho (X)' = \rho (X)=2$. If $X$ is a rational normal curve we have $\rho (X) =\rho (X)'=r+1$. This is the
only case with $\rho (X)' =r+1$ (Lemma \ref{i3}). Theorem \ref{xx2} implies that $\rho (X)'' < 2\lceil (r+1)/(n+1)\rceil$ if $(r+1)/(n+1)\notin \ZZ$.

The example of a general linear projection in $\PP^4$ of the Veronese surface shows that in Proposition \ref{xx2} it is not
sufficient to assume that $\rho (X)'' \ge \lceil (r+2)/(n+1)\rceil$.

We point out that to get our results we only use a small family of zero-dimensional schemes, each of them with connected components of degree $1$ or $2$, but that this family
contains a complete family covering $X$: each $p\in X$ is contained in some scheme $Z$ of the family.

\section{Preliminaries}

 We recall that if $q\in \sigma _k(X)$ and $k\le \rho (X)''$ there is a zero-dimensional scheme $Z\subset X$ smoothable in $X$
and such that $\deg (Z)\le b$ and $q\in \langle Z\rangle$ (\cite[Lemma 2.6, Theorem 1.18]{bgl} and \cite[Proposition 2.5]{bb+}).

For any $q\in \PP^r$ let $\Ss (X,q)$ be the set of all $S\subset
X$ such that $|S|=r_X(q)$ and $q\in \langle S\rangle$.

\begin{remark}
Let $X\subset \PP^r$ be a smooth variety with $\dim X\le 2$. Every zero-dimensional scheme of $X$ is smoothable (\cite{f}) and hence $\rho
(X)''=\rho (X)$. Easy examples show that we may have $\rho (X) <\rho (X)'$ for a smooth curve (Examples \ref{odd1} and \ref{odd1.0}).
\end{remark}

\begin{remark}\label{0x0}
(\cite[Theorem 1.17]{bgl}) Fix $q\in \PP^r$ and $A, B\in \Ss (X,q)$. Set $x:= r_X(q)$ and assume $\rho (X)'\ge 2a$. Since $|A\cup B| \le 2a$, $A\cup B$ is linearly independent. Thus $A=B$
(\cite[Lemma 1]{bb}). Hence $|\Ss (X,q)|=1$.
\end{remark}

The following extremal case is the only result in which we are able to use only $\rho (X)'$ instead of $\rho (X)''$.

\begin{lemma}\label{i3}
The following conditions are equivalent;
\begin{enumerate}
\item $X$ is a rational normal curve;
\item $\rho (X)' =r+1$;
\item $\rho (X)'>r$;
\end{enumerate}
\end{lemma}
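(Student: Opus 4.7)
The plan is to establish the cycle $(1)\Rightarrow (2)\Rightarrow (3)\Rightarrow (1)$, where the first two implications are essentially immediate and all the content lies in $(3)\Rightarrow(1)$.

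For $(1)\Rightarrow (2)$: a rational normal curve of degree $r$ in $\PP^r$ meets every hyperplane in a divisor of degree $r$, so no $r+1$ distinct points of $X$ can lie in a common hyperplane; hence every $(r+1)$-subset is linearly independent and $\rho(X)'\ge r+1$. Trivially $\rho(X)'\le r+1$, since $r+2$ distinct points of $\PP^r$ can never be linearly independent. The implication $(2)\Rightarrow (3)$ is tautological.

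For $(3)\Rightarrow (1)$, assume $\rho(X)'\ge r+1$, so every set of $r+1$ distinct points of $X$ is linearly independent. First I would show $\dim X=1$. Suppose $n:=\dim X\ge 2$. Because $X$ is non-degenerate, some finite subset of $X$ spans $\PP^r$; taking a minimal such subset produces $r+1$ distinct points whose any $r$ of them span a hyperplane. Fix such points $p_1,\dots,p_r$ with $\langle p_1,\dots,p_r\rangle = H$ a hyperplane. Since $X\not\subset H$, the intersection $X\cap H$ has pure dimension $n-1\ge 1$, hence contains a point $p\notin\{p_1,\dots,p_r\}$. Then $\{p_1,\dots,p_r,p\}$ is a set of $r+1$ distinct points of $X$ lying in the hyperplane $H$, hence linearly dependent, contradicting $\rho(X)'\ge r+1$.

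Once $X$ is a curve, I would bound its degree by a generic hyperplane section. Since $X$ is non-degenerate in $\PP^r$, $\deg X\ge r$. Conversely, by Bertini (we are in characteristic $0$) a general hyperplane $H\subset\PP^r$ cuts $X$ transversally in $\deg X$ distinct points; if $\deg X\ge r+1$ then any $r+1$ of them form a linearly dependent subset of $X$, again contradicting $\rho(X)'\ge r+1$. Hence $\deg X=r$, and the classical characterisation of curves of minimal degree gives that $X$ is a rational normal curve. The only genuinely non-formal step is the dimension-reduction argument in the first part of $(3)\Rightarrow(1)$; once $X$ is known to be a curve, everything else is routine.
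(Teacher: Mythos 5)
Your proof is correct. The heart of the argument, the curve case of $(3)\Rightarrow(1)$, is exactly the paper's: a general hyperplane meets $X$ transversally in $\deg(X)$ points, so $\rho(X)'>r$ forces $\deg(X)=r$, and a non-degenerate curve of minimal degree is a rational normal curve. Where you diverge is the reduction to the curve case. The paper handles $n\ge 2$ by cutting $X$ with a general linear subspace $V$ of codimension $n-1$, invoking a Bertini-type statement that $X\cap V$ is an integral curve spanning $V$, and then applying the curve case inside $V$ together with the monotonicity $\rho(X)'\le \rho(X\cap V)'$. You instead argue directly: any hyperplane $H$ with $X\not\subset H$ meets $X$ in a set of dimension $n-1\ge 1$, hence in infinitely many points, and any $r+1$ of those are automatically dependent since they lie in $H$. (Your construction of the specific points $p_1,\dots,p_r$ is not even needed for this; any hyperplane works once $n\ge 2$.) Your route is more elementary --- it uses only the projective dimension theorem rather than integrality of general linear sections --- while the paper's slicing argument has the mild advantage of producing the sharper bound $\rho(X)'\le \rho(X\cap V)'\le r-n+2$ in terms of $n$. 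Both are complete proofs of the equivalence.
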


\begin{proof}
It is sufficient to prove that (3) implies (1). 

First assume $n=1$. Let $H\subset \PP^r$ be a general linear hyperplane. Since
$X\cap H$ is formed by $\deg (X)$ points, if $\rho (X') >r$ we have $\deg (X)=r$ and hence $X$ is a rational normal curve.

Now
assume
$n>1$. Take a general linear subspace
$V\subset
\PP^r$ with codimension $n-1$. The scheme $X\cap V$ is an integral curve spanning $V$. We have $\rho (X)'\le \rho (X\cap
V)'\le r-n+1$ by the case $n=1$ just proved.
\end{proof}

\section{The proofs}

\begin{proof}[Proof of Proposition \ref{xx1}:]
Assume that $X$ is not linearly normal. Thus there is a non-degenerate variety $Y\subset \PP^{r+1}$ such
that $X$ is an isomorphic linear projection of $Y$ from some $o\in \PP^{r+1}\setminus Y$.  Set $b:= b_Y(o)$. Each secant
variety of a curve has the expected dimension (\cite[Remark 1.6]{a}). Thus $r_{Y,\mathrm{gen}} =\lceil (r+2)/2\rceil$. Hence
$b\le \lceil (r+2)/2\rceil$. Let
$\ell :
\PP^{r+1}\setminus \{o\}\to \PP^r$ denote the linear  projection from $o$. By assumption $o\notin Y$ and $\ell _{|Y}$ is an
embedding with $\ell (Y)=X$. Let $W\subset Y$ be any zero-dimensional scheme. Since $\ell _{|Y}: Y\to X$ is an isomorphism,  $W$
is smoothable in $Y$ if and only if $\ell (W)$ is smoothable in $X$ and any degree $b$ smoothable zero-dimensional scheme is
the image of a unique degree $b$ zero-dimensional scheme. Thus $\rho (Y)'' \ge \rho (X)''$. The image in $\PP^r$ of a linear
subspace $V \subset \PP^{r+1}$ has either dimension $\dim V$ (case $o\notin V$) or dimension $\dim V -1$ (case $o\in V$).
Since $\rho (Y)'' \ge \rho (X)'' \ge \lceil (r+2)/2\rceil =r_{Y,\mathrm{gen}}$ and $b\le r_{Y,\mathrm{gen}}$, there is a
smoothable zero-dimensional scheme $W\subset Y$ such that $o\in \langle W\rangle$ and $\deg (W)=b$. Since $\ell (W)$ is not
linearly independent, we have $\rho (X)'' \le b-1$, a contradiction.
\end{proof}

\begin{proof}[Proof of Proposition \ref{xx2}:]
Assume that $X$ is not linearly normal. Thus there is a non-degenerate variety $Y\subset \PP^{r+1}$ such
that $X$ is an isomorphic linear projection of $Y$ from some $o\in \PP^{r+1}\setminus Y$.  Set $b:= b_Y(o)$ and $a:= r_{X,\mathrm{gen}} = \lceil
(r+1)/(n+1)\rceil$. Let $\ell : \PP^{r+1}\setminus \{o\}\to \PP^r$ denote the linear  projection from $o$. By assumption
$o\notin Y$ and $\ell _{|Y}$ is an embedding with $\ell (Y)=X$. As in the proof of Proposition \ref{xx1} we have $\rho
(Y)''\ge \rho (X)''$ and to get a contradiction it is sufficient to prove that $b\le \rho (X)''$. Assume $b> \rho (X)''$, i.e.
assume $b\ge a+2$. Since $b>a$, we have $o\notin \sigma _a(Y)$. Hence $\ell _{|\sigma _a(Y)}: \sigma _a(Y)\to \PP^r$ is a
finite map. Since
$\ell (\sigma _a(Y)) =\sigma _a(X)$, we get
$\dim \sigma _a(Y) =r$. Since $\dim \sigma _{a+1}(Y) > \dim \sigma _a(Y)$ (\cite[Proposition 1.3]{a}), we get $\sigma _{a+1}(Y)
=
\PP^{r+1}$. Thus
$b \le a+1$, a contradiction.\end{proof}

\begin{lemma}\label{xx4}
Assume $\rho (X)'' \ge 2\lceil (r+1)/(n+1)\rceil$. Then $X$ is not defective, $r+1\equiv 0\pmod{n+1}$ and for a general $q\in
\PP^r$ we have $|\Ss (X,q)| =1$.
\end{lemma}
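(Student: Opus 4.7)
Set $a := \lceil (r+1)/(n+1)\rceil$, so that the hypothesis reads $\rho(X)''\ge 2a$ and hence $\rho(X)'\ge 2a$. The plan is to pin down $r_{X,\mathrm{gen}}$ and $\dim\sigma_a(X)$ exactly via a fiber-dimension count on the incidence of $a$-secant tuples; divisibility, non-defectivity, and generic uniqueness will then all drop out.

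The easy half is $r_{X,\mathrm{gen}}\ge a$: the trivial bound $\dim \sigma_{a-1}(X)\le (a-1)(n+1)-1$ combined with $(a-1)(n+1)<r+1$ (built into the definition of the ceiling) shows $\sigma_{a-1}(X)\subsetneq \PP^r$. For the matching upper bound I would consider the incidence variety $I_a\subset X^a\times\PP^r$ of tuples $(p_1,\dots,p_a,q)$ with $q\in\langle p_1,\dots,p_a\rangle$. Since $\rho(X)'\ge a$ forces every $a$-tuple to be independent, the fiber of $I_a\to X^a$ is generically $(a-1)$-dimensional and $\dim I_a=a(n+1)-1$. For a general $q\in \sigma_a(X)\setminus \sigma_{a-1}(X)$ one has $r_X(q)=a$ (the rank-$a$ locus is open and dense in $\sigma_a(X)$ by the very definition of the secant variety), and Remark \ref{0x0} applied with $\rho(X)'\ge 2a$ gives $|\Ss(X,q)|=1$. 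Equivalently, the projection $I_a\to\sigma_a(X)$ has a $0$-dimensional general fiber, whence $\dim\sigma_a(X)=a(n+1)-1$.

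Now $\sigma_a(X)\subseteq\PP^r$ forces $a(n+1)\le r+1$; combined with the defining inequality $a(n+1)\ge r+1$ we conclude $a(n+1)=r+1$, i.e.\ $(n+1)\mid(r+1)$, and moreover $\sigma_a(X)=\PP^r$, so $r_{X,\mathrm{gen}}=a$. Running the same incidence-plus-uniqueness argument for every $k\le a$ (allowed because $\rho(X)'\ge 2k$ and because $\dim\sigma_{k-1}(X)<r$ ensures $\sigma_{k-1}(X)\subsetneq\sigma_k(X)$ by \cite[Proposition 1.3]{a}) gives $\dim\sigma_k(X)=k(n+1)-1$, so $X$ is not defective. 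For a general $q\in\PP^r$ one now has $r_X(q)=a$, and one last application of Remark \ref{0x0} delivers $|\Ss(X,q)|=1$.

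The step I expect to be most delicate is the passage from the pointwise statement $|\Ss(X,q)|=1$ to the global fiber-dimension conclusion $\dim\sigma_a(X)=\dim I_a$: one has to rule out that some unexpected positive-dimensional family of (unordered) $a$-subsets mapping to the same $q$ could inflate the fiber of $I_a\to\sigma_a(X)$. This is routine once one notes that uniqueness of the unordered set, combined with the finiteness of the $\Sigma_a$-action on $X^a$, forces the generic fiber to be $0$-dimensional.
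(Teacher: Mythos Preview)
Your argument is correct and in fact uses only the weaker hypothesis $\rho(X)'\ge 2a$ (implied by $\rho(X)''\ge 2a$), which is a mild strengthening of the statement. The route, however, is genuinely different from the paper's. The paper proceeds infinitesimally: by Terracini's lemma it reduces non-defectivity and the divisibility $r+1\equiv 0\pmod{n+1}$ to showing that $\langle T_{p_1}X\cup\cdots\cup T_{p_a}X\rangle$ has the expected dimension $a(n+1)-1$ for general $p_1,\dots,p_a$, and then checks this by observing that the union of $a$ connected degree-$2$ schemes supported at the $p_i$ is smoothable of degree $2a$, hence linearly independent by the hypothesis on $\rho(X)''$ (together with \cite[Lemma~1]{b2}); uniqueness is handled exactly as you do, via Remark~\ref{0x0}. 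Your incidence-variety/fibre-count bypasses Terracini and the external lemma entirely and needs only reduced subsets, which is pleasant; on the other hand the paper's tangent-space computation is not wasted, since the exact statement $\dim\langle\cup_{p\in S}T_pX\rangle=(|S|)(n+1)-1$ is invoked again in the proof of Theorem~\ref{xx2==} (the linear projection from $V=\langle\cup_{p\in S}T_pX\rangle$), so their approach feeds directly into the next argument.
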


\begin{proof}
Set $a:= \lceil (r+1)/(n+1)\rceil$. By Terracini's lemma (\cite[Corollary 1.11]{a}) to prove that $X$ is not defective and
that
$r+1\equiv 0\pmod{n+1}$ it is sufficient to prove that $\dim \langle T_{p_1}X\cup \cdots \cup T_{p_a}X\rangle = a(n+1)-1$ for
a general
$(p_1,\dots ,p_a)\in X^a$. Since each $p_i$ is a smooth point of $X$, each degree $2$ connected zero-dimensional scheme
$v_i\subset X$ such that $(v_i)_{\mathrm{red}} =\{p_i\}$ is smoothable. Apply \cite[Lemma 1]{b2}. Thus $X$ is not defective.
Since $r+1 =a(n+1)$ and $X$ is not defective, we have $\sigma _a(X)=\PP^r$ and $\Ss (X,q)$ is finite for a general
$q\in
\Ss (X,q)$. Fix any $q\in \PP^r$ such that $x:= r_X(q) $. We have $|\Ss (X,q)| =1$ if $\rho
(X)'\ge 2x$ by Remark \ref{0x0}. We proved that $a\ge x$.
\end{proof}

The (smooth) $n$-dimensional varieties $X\subset \PP^{2n+1}$ such that $\sigma _2(X)=\PP^{2n+1}$ and $|\Ss (X,q)|=1$ are classically called  OADP (or varieties with only one apparent double point), because projecting them from a general point of $X$ one gets a variety with a unique singular point (\cite{cmr}). They are always linearly normal
(\cite[Remark 1.2]{cmr}. In \cite{cmr} there are also older references and the classification of the smooth ones with dimension up to $3$ (\cite{r}, \cite[Theorem 7.1]{cmr}). Thus the thesis of Lemma \ref{xx4} is a generalization of this concept to the case in which $(r+1)/(n+1)$ is an integer $>2$. But the assumption ``$\rho (X)'' \ge 2\lceil
(r+1)/(n+1)\rceil$ '' of the lemma is too strong to be interesting for the classification of extremal varieties. Just assuming
$\rho '(X)>2$ excludes all $X$ containing lines and hence all smooth OADP's of dimension $2$ and $3$.

\begin{corollary}\label{xx3}
Assume $n:= \dim X\ge 2$ and $\rho (X)'' \ge 2\lceil (r+1)/(n+1)\rceil$. Then $X$ is linearly normal, non-defective, $r\equiv 0\pmod{n+1}$, $\rho (X)'' =2(r+1)/(n+1)$ and $|\Ss (X,q)|=1$ for a general $q\in \PP^r$.
\end{corollary}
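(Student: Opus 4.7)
The plan is to combine Lemma \ref{xx4}, Proposition \ref{xx2}, and the hypothesis to read off the five conclusions in sequence. Write $a:=\lceil(r+1)/(n+1)\rceil$, so the hypothesis is $\rho(X)''\ge 2a$. Lemma \ref{xx4} applies verbatim and yields three of the bulleted conclusions: $X$ is non-defective, $(n+1)\mid(r+1)$ (hence $a=(r+1)/(n+1)\in\ZZ$), and $|\Ss(X,q)|=1$ for general $q\in\PP^r$. Combining non-defectiveness with $a(n+1)-1=r$ gives $\sigma_a(X)=\PP^r$ and $r_{X,\mathrm{gen}}=a$. For linear normality I would then invoke Proposition \ref{xx2}: its numerical hypotheses $r_{X,\mathrm{gen}}=\lceil(r+1)/(n+1)\rceil$ and $\rho(X)''>\lceil(r+1)/(n+1)\rceil$ are both in hand (the latter since $\rho(X)''\ge 2a>a$), and $n\ge 2$ is assumed.

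The remaining assertion is $\rho(X)''=2a=2(r+1)/(n+1)$, whose $\ge$ direction is the hypothesis after absorbing the divisibility $(n+1)\mid(r+1)$. For the $\le$ direction I must exhibit a smoothable zero-dimensional scheme $Z\subset X$ of degree $2a+1$ which is linearly dependent. The natural attempt is to take $a$ general points $p_1,\dots,p_a\in X$, a general tangent vector $v_i$ at each $p_i$ (a smoothable length-$2$ subscheme of $X$ supported at $p_i$), and form $Z_0:=v_1\cup\cdots\cup v_a$ of degree $2a$. By the lower bound $\rho(X)''\ge 2a$ the scheme $Z_0$ is linearly independent, so $\Pi:=\langle Z_0\rangle$ is a $(2a-1)$-plane; any $p_{a+1}\in(X\cap\Pi)\setminus\{p_1,\dots,p_a\}$ would then produce the linearly dependent $Z:=Z_0\cup\{p_{a+1}\}$ of degree $2a+1$, as required.

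The main obstacle will be forcing the extra incidence $X\cap\Pi\supsetneq\{p_1,\dots,p_a\}$. The expected intersection dimension is $n+(2a-1)-r=n-(n-1)a$, which is negative as soon as $(n-1)a>n$; consequently a generic choice of tangent vectors will not create a new point of $X$ in $\Pi$ in most cases. To get around this I would not choose the $v_i$ generically but instead degenerate a general minimal decomposition: fix a general $q\in\PP^r$ with unique $A=\{p_1,\dots,p_a\}\in\Ss(X,q)$, move $q$ along a one-parameter family so that two points of $A_{q}$ collide, and read off the tangent direction of the collision to produce constrained $v_i$'s. The uniqueness $|\Ss(X,q)|=1$ together with the linear normality of $X$ just obtained should force the limiting tangential datum to meet $X$ inside $\Pi$ beyond $\{p_1,\dots,p_a\}$, giving the sought-for dependent scheme. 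Making this deformation argument rigorous, and checking that the limit point truly lies outside the original support, is where the principal work lies.
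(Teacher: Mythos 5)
Your first two paragraphs coincide with the paper's own argument: Lemma \ref{xx4} delivers non-defectiveness, $(n+1)\mid(r+1)$ and $|\Ss (X,q)|=1$ for general $q$, and then $r_{X,\mathrm{gen}}=a$ (which follows from non-defectiveness plus $a(n+1)=r+1$) together with $\rho (X)''\ge 2a>a$ puts you exactly in the hypotheses of Proposition \ref{xx2}, giving linear normality. That part is correct and is precisely what the printed proof does.

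The gap is the equality $\rho (X)''=2(r+1)/(n+1)$, and your own diagnosis of the difficulty is accurate: for $n\ge 2$ the $(2a-1)$-plane $\Pi=\langle Z_0\rangle$ has negative expected dimension of intersection with $X$ beyond its support, so the scheme ``$a$ tangent vectors plus one extra point of $X$ in their span'' does not exist for general choices. The proposed repair by degenerating a minimal decomposition does not close this: colliding two points of $A_q$ along a one-parameter family produces in the limit a degree-$a$ smoothable scheme that is still linearly \emph{independent} (its degree is below $\rho (X)''$) and whose span contains the limit point; nothing in that picture manufactures a linearly dependent smoothable scheme of degree $2a+1$, and no mechanism is identified that forces an extra point of $X$ into $\Pi$. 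So the fifth conclusion is not proved as written. The clean way to finish is to invoke Theorem \ref{xx2==}, whose proof depends only on Lemma \ref{xx4} and the projection from $\langle \cup _{p\in S}T_pX\rangle$, not on this corollary, so there is no circularity: for $n\ge 2$ the hypothesis $\rho (X)''\ge 2a$ forces $r=n$, i.e. $X=\PP^n$, in which case $a=1$ and $\rho (X)''=2=2(r+1)/(n+1)$ because three collinear points form a dependent smoothable scheme; for $r>n$ the statement is vacuous. Be aware that the paper's own two-line proof (reduce to non-defectiveness via Proposition \ref{xx2}, then apply Lemma \ref{xx4}) is also silent on this equality, so your attempt to supply a direct construction goes beyond the printed argument---but the construction you sketch does not succeed.
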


\begin{proof}[Proof of Corollary \ref{xx3}:]
By Proposition \ref{xx2} it is sufficient to prove that $X$ is non-defective. Apply Lemma \ref{xx4}.
\end{proof}

\begin{proof}[Proof of Theorem \ref{xx2==}:]
Assume the existence of $X$ with $\rho (X)''\ge 2\lceil
(r+1)/(n+1)\rceil$. We may assume $n<r$, i.e. $X\ne \PP^n$.

First assume $n=1$. Lemma \ref{i3} gives that $X$ is a rational normal
curve, that
$r$ is odd and that
$\rho (X)=
\rho (X)' = r+1$.

Now assume $n\ge 2$. By Lemma \ref{xx4} $a:= (r+1)/(n+1)$ is an integer. We may assume $a\ge 2$, i.e. $r\ne n$. Fix a general $S\subset
X$
such that $|S|=a-1$. Since $S$ is general, each $p\in S$ is a smooth point of $X$. We saw in the proof of Proposition
\ref{xx2} that $V:= \langle \cup _{p\in S} T_pX\rangle$ has dimension $(a-1)(n+1)-1$. Fix $o\in X\setminus S$.

\quad \emph{Claim 1:} $o\notin V$.

\quad \emph{Proof of Claim 1:} Assume $o\in V$. We saw in the proof of Proposition \ref{xx2} that there are connected degree
$2$ zero-dimensional schemes $v_p\subset X$ such that $(v_p)_{\mathrm{red}} \{p\}$ and $o\in \langle Z\rangle$, where $Z :=
\cup _{p\in S} v_p$. Since $o\notin S$, we have $o\nsubseteq Z$. Thus the scheme $Z\cup \{o\}$ is linearly dependent.
Since $\deg (Z\cup \{o\}) = 2a-1< \rho (X)''$ and $Z\cup \{o\}$ is smoothable, we got a contradiction.
Let $\ell : \PP^r\setminus V\to \PP^n$ denote the linear projection from $V$. By Claim 1 $\ell_1: X\setminus S \to \PP^n$
is a morphism. Fix $o\in X\setminus S$ and assume the existence of $o'\in X\setminus S$ such that $o\ne o'$ and $\ell _1(o)
=\ell _1(o')$. Thus $o'\in \langle \{o\}\cup Z\rangle$. Hence $\{o,o'\}\cup Z$ is linearly dependent. The
zero-dimensional scheme $\{o,o'\}\cup Z$ is smoothable and it has  degree $2a \le \rho (X)''$, a contradiction.

Thus $\ell _1: X\setminus S \to \PP^n$ is an injective morphism between two quasi-projective varieties. Since $\PP^n$ is smooth
(it would be sufficient to assume that the target, $X$, is normal or even less (weakly normal)) and we are in characteristic
zero,
$\ell _1$ is an open map which is an isomorphism onto its image (\cite{gt}). Since $X$ is smooth at each point of $S$, $X$ is smooth. Hence the map $\ell _1: X\setminus S \to \PP?n$
extends over $S$. Since $X$ is smooth at each point of $S$, $\ell _1$ lifts to a morphism $u: \tilde{X}\to X$, where $\tilde{X}$ is the blowing-up of $X$ at all points of $S$. We first get that
$X\cong
\PP^n$ as an abstract variety and then (since any morphism $\PP^n\to \PP^n$ injective outside a finite set is an isomorphism) that $u$ does not exist when $n>1$.\end{proof}

\section{Elementary examples}

By Proposition \ref{xx1} to complete the picture for curves we need to describe the linearly normal curves with very high $\rho (X)'$, $\rho (X)$ and $\rho (X)''$.
\begin{remark}\label{oi2.0}
Let $X$ be an integral projective curve. To compute $\rho (X)''$ we recall that every Cartier divisor of $X$ is smoothable. Let $\Ff$ be any torsion free sheaf of $X$.
Duality gives $h^1(\Ff ) =\dim \mathrm{Hom}(\Ff ,\omega _X))$, (\cite[1.1 at p. 5]{ak}). Thus for any zero-dimensional
scheme
$Z\subset X$ we have
$h^1(\Ii _Z(1)) = \dim \mathrm{Hom}(\Ii _Z ,\omega _X(1))$. If $d \ge 3g-2$ we have $\rho (X) = d-2g+2$. We have $\rho (X)'
=d-2g+2$ if and only if $X$ is Gorenstein, i.e. $\omega _X$ is locally free. For lower $d$ the integers $\rho (X)$, $\rho
(X)'$ and $\rho (X)''$ depends both from the Brill-Noether theory of the special line bundles on $X$ and the choice of the
very ample line bundle $\Oo _X(1)$, not just the integers $d$ and $g$.
\end{remark}

\begin{example}\label{odd1}
Fix integers $r, a$ such that $2\le a \le r+1$. Here we prove the existence of a smooth and  non-degenerate curve
$X\subset \PP^r$ such that $\rho (X)'=\rho (X) =a$. If $a=r+1$ we know that $X$ is a rational normal curve. See the case $g=1$ and $d=r+1$ of Remark \ref{oi2.0} for
the case $a=r$. Now assume $2\le a \le r-1$. 

\quad (a) We first cover the case $2a\le r+1$. In this range we construct a smooth
rational curve $X$ with $\rho (X)=\rho (X)'=a$, but of course $X$ is not linearly normal.  Let
$Y\subset
\PP^{r+1}$ be a rational normal curve. Fix a set
$S\subset X$ such that
$|S|= a+1$ and take any $o\in \langle S\rangle$ such that $o\notin \langle S'\rangle$ for any $S'\subsetneq S$. Let $\ell
:\PP^{r+1}\setminus
\{o\}\to
\PP^r$ denote the linear projection from $o$. Since $a\ge 2$ and $\rho (Y)=r+2$, we have $o\notin Y$. Hence $\ell _{|Y}$ is a
morphism. Set $X:= \ell (Y)$.

\quad \emph{Claim 1:} $\ell _{|Y}$ is an embedding.

\quad \emph{Proof of Claim 1:} It is sufficient to prove that for any zero-dimensional scheme $A\subset Y$ with $\deg (A)\le 2$
we have $o\notin \langle A\rangle$. Assume the existence of a zero-dimensional scheme $A\subset Y$ with $\deg (A)\le 2$
and $o\in \langle A\rangle$. Since $o\notin Y$, we have $\deg (A)=2$. Since $o\notin \langle S'\rangle $ for any $S'\subsetneq
S$ and $|S|=a+1>2$, we have $A\nsubseteq S$. Since $o\in \langle A\rangle \cap \langle S\rangle$, $A\cup S$ is linearly
dependent. Since $\deg (A\cup S)\le a+3$ and $\rho (Y)=r+1$, we get a contradiction.

By Claim 1  $X$ is a smooth rational curve and $\deg (X) =r+1$.

\quad \emph{Claim 2:} We have $\rho (X) =\rho (X)'=a$.

\quad \emph{Proof of Claim 2:} Since $\ell _{|X}$ is an embedding, we have $|\ell (S)|=a+1$. Since $o\in \langle S\rangle$,
$\ell (S)$ is linear dependent and hence $\rho (X)'\le a$. Assume $\rho (X)'<a$ and take a zero-dimensional scheme $Z\subset X$
such that $\deg (Z)\le a$ and $Z$ is linearly dependent. Le $W\subset Y$ be the only scheme such that $\ell (W) =Z$.
Since $Z$ is linearly dependent, we have $o\in \langle W\rangle$. Since $\deg (W) \le a$ and $o\notin \langle S'\rangle$ for
any $S'\subsetneq S$, we have
$W\nsubseteq S$. Thus $S\cup W$ is linearly dependent. Hence $2a+1 \ge r+2$, a contradiction.

\quad (b) Now assume $2a \ge r$, $a\le r-1$ and $a\ge 2$. Fix a smooth curve $C$ of genus $g$ and a zero-dimensional scheme $A\subset X$ such that
$\deg (A)=a+1$. Since $\deg (\omega _C(A)) =2g+a+1-2 \ge 2g+1$, $\omega _C(A)$ is very ample. By Riemann-Roch we have
$h^0(\omega _C(A)) =g+a =r+1$ and $A$ is the only zero-dimensional scheme $Z\subset C$ such that $h^1(\omega _C(A-Z))>0$ and $\deg (Z)\le a+1$. Let $f: C\to \PP^r$, $r = g+a$, denote the embedding induced by the complete linear system $|\omega _C(A)$. Set $X:= f({C})$. We have $\rho (X) =a$. We have $\rho (A)' =a$ if and only if $A$ is reduced.

\quad ({c}) Take in part (a) instead of $S$ a zero-dimensional scheme $S_1$ such that $\deg (S_1)=a+1$ and $S_1$ is not
reduced. Taking the linear projection from $o$ we get an example with $\rho (X)<\rho (X)'$.
\end{example}

\begin{example}\label{odd1.0}
Fix integers $r, a$ such that $2\le a \le r+1$.  Here we prove the existence of a smooth and  non-degenerate curve
$X\subset \PP^r$ such that $\rho (X)'=\rho (X) =a$. If $a=r+1$ we know that $X$ is a rational normal curve. The case $g=1$, $d=r+1$ covers the
the case $a=r$. Now assume $2\le a \le r-1$. 

\quad (a) We first cover the case $2a\le r+1$. In this range $X$ we construct a smooth
rational curve $X$ with $\rho (X)=\rho (X)'=a$, but of course $X$ is not linearly normal.  Let
$Y\subset
\PP^{r+1}$ be a rational normal curve. Fix a set
$S\subset X$ such that
$|S|= a+1$ and take any $o\in \langle S\rangle$ such that $o\notin \langle S'\rangle$ for any $S'\subsetneq S$. Let $\ell
:\PP^{r+1}\setminus
\{o\}\to
\PP^r$ denote the linear projection from $o$. Since $a\ge 2$ and $\rho (Y)=r+2$, we have $o\notin Y$. Hence $\ell _{|Y}$ is a
morphism. Set $X:= \ell (X)$.

\quad \emph{Claim 1:} $\ell _{|Y}$ is an embedding.

\quad \emph{Proof of Claim 1:} It is sufficient to prove that for any zero-dimensional scheme $A\subset Y$ with $\deg (A)\le 2$
we have $o\notin \langle A\rangle$. Assume the existence of a zero-dimensional scheme $A\subset Y$ with $\deg (A)\le 2$
and $o\in \langle A\rangle$. Since $o\notin Y$, we have $\deg (A)=2$. Since $o\notin \langle S'\rangle $ for any $S'\subsetneq
S$ and $|S|=a+1>2$, we have $A\nsubseteq S$. Since $o\in \langle A\rangle \cap \langle S\rangle$, $A\cup S$ is linearly
dependent. Since $\deg (A\cup S)\le a+3$ and $\rho (Y)=r+1$, we get a contradiction.

By Claim 1  $X$ is a smooth rational curve and $\deg (X) =r+1$.

\quad \emph{Claim 2:} We have $\rho (X) =\rho (X)'=a$.

\quad \emph{Proof of Claim 2:} Since $\ell _{|X}$ is an embedding, we have $|\ell (S)|=a+1$. Since $o\in \langle S\rangle$,
$\ell (S)$ is linear dependent and hence $\rho (X)'\le a$. Assume $\rho (X)'<a$ and take a zero-dimensional scheme $Z\subset X$
such that $\deg (Z)\le a$ and $Z$ is linearly dependent. Le $W\subset Y$ be the only scheme such that $\ell (W) =Z$.
Since $Z$ is linearly dependent, we have $o\in \langle W\rangle$. Since $\deg (W) \le a$ and $o\notin \langle S'\rangle$ for
any $S'\subsetneq S$, we have
$W\nsubseteq S$. Thus $S\cup W$ is linearly dependent. Hence $2a+1 \ge r+2$, a contradiction.

\quad (b) Now assume $2a \ge r$, $a\le r-1$ and $a\ge 2$. Set $g:= r+1-a$. Fix a smooth curve $C$ of genus $g$ and a
zero-dimensional scheme
$A\subset X$ such that
$\deg (A)=a+1$. Since $\deg (\omega _C(A)) =2g+a+1-2 \ge 2g+1$, $\omega _C(A)$ is very ample. By Riemann-Roch we have
$h^0(\omega _C(A)) =g+a =r+1$. Let $f: C\to \PP^r$, be the embedding induced by $|\omega _X(A)|$ Set $X:= f({C})$ and $Z:=
f(A)$. By Rieman-Roch $Z$ is linearly dependent and $Z$ is the only zero-dimensional scheme $W\subset X$ such that $\deg (W)\le
a+1$ and $W$ linearly dependent. Thus $\rho (X)=a$. We have $\rho (X)'=a$ if and only if $A$ is a reduced set.

\quad ({c}) Take in part (a) instead of $S$ a zero-dimensional scheme $S_1$ such that $\deg (S_1)=a+1$ and $S_1$ is not
reduced. Taking the linear projection from $o$ we get smooth rational curves with $\rho (X)<\rho (X)'$.

\quad (d) Take $C$ as in step (b) and a zero-dimensional scheme $A_1\subset \PP^r$ such that $\deg (A_1)=a+1$ and $A_1$ is not
reduced. As in step (b) we get an embedding $X\subset \PP^r$ of $C$ such that $\rho (X)=a$ and the image of $A_1$ is the only
linearly dependent degree $a+1$ subscheme of $X$. Thus $\rho (X)'>a$.
\end{example}

\providecommand{\bysame}{\leavevmode\hbox to3em{\hrulefill}\thinspace}
\providecommand{\MR}{\relax\ifhmode\unskip\space\fi MR }
\providecommand{\MRhref}[2]{%
  \href{http://www.ams.org/mathscinet-getitem?mr=#1}{#2}
}
\providecommand{\href}[2]{#2}

\end{document}